\newtheorem{theorem}{Theorem}[section]
\newtheorem{lemma}[theorem]{Lemma}
\numberwithin{equation}{section}
\renewcommand{\(}{\left(}
\renewcommand{\)}{\right)}
\newcommand{\beq}{\begin{equation}}
	\newcommand{\eeq}{\end{equation}}
\newcommand{\ba}{\begin{aligned}}
	\newcommand{\ea}{\end{aligned}}
\definecolor{darkblue}{rgb}{0.05, .05, .65}
\definecolor{darkgreen}{rgb}{0.1, .65, .1}
\definecolor{darkred}{rgb}{0.8,0,0}
\theoremstyle{plain}
\newtheorem{thm}{Theorem}[section]
\newtheorem{lem}[thm]{Lemma}
\newtheorem{prop}[thm]{Proposition}
\numberwithin{equation}{section}
\theoremstyle{remark}
\DeclareMathOperator{\dist}{dist}
\DeclareMathOperator{\vspan}{span}
\renewcommand{\(}{\left(}
\renewcommand{\)}{\right)}
\renewcommand{\O}{\mathcal O}
\newcommand{\eps}{\varepsilon}
\newcommand{\N}{\mathcal N_{\de,\xi}}
\newcommand{\E}{\mathcal E_{\de,\xi}} 
\newcommand{\Om}{\Omega}
\renewcommand{\r}{\rangle}
\renewcommand{\l}{\langle}
\renewcommand{\leq}{\leqslant}
\newcommand{\RR}{{\mathbb{R}^4}}
\renewcommand{\L}{\mathcal{L}}
\newcommand{\U}{U_{\delta,\xi}}
\newcommand{\de}{\delta}
\begin{document}
	\title[The Brezis-Nirenberg problem in 4D]{The Brezis-Nirenberg problem in 4D}

 \author{Angela Pistoia}
\address{Angela Pistoia, Dipartimento di Scienze di Base e Applicate per l'Ingegneria, Sapienza Universit\`a di Roma, Via Antonio Scarpa 10, 00161 Roma (Italy)}
\email{angela.pistoia@uniroma1.it}

 \author{Serena Rocci}
\address{Serena Rocci, Dipartimento di Scienze di Base e Applicate per l'Ingegneria, Sapienza Universit\`a di Roma, Via Antonio Scarpa 10, 00161 Roma (Italy)}
\email{serena.rocci@uniroma1.it}

\begin{abstract}
We address the existence of blowing-up solutions for the Brezis-Nirenberg problem in 4D.\\

\centerline{\em Dedicated to Yihong Du's 60th birthday}
\end{abstract}
\date\today
\subjclass{Primary: 35J25. Secondary: 35B09}
\keywords{Brezis-Nirenberg problem, blow-up solutions,  Ljapunov-Schmidt construction}
\thanks{ The authors are partially supported by the group GNAMPA of the Istituto Nazionale di Alta Matematica (INdAM)
}
\maketitle
	\section{Introduction}
 The   problem
 \begin{equation}
 \label{bn}
 -\Delta u=|u|^{4\over n-2}u+\lambda V u\ \hbox{in}\ \Omega,\ u=0\ \hbox{on}\ \partial\Omega
 \end{equation}
 where $\Omega$ is a bounded regular domain in $\mathbb R^n$, $\lambda\in \mathbb R$ and $V\in C^0(\overline \Omega)$
 was  introduced by Brezis and Nirenberg in the pivotal paper \cite{bn}, where they address the  existence of positive solutions 
 in the autonomous case, i.e.   the potential $V$ is constant.  Since then, a huge aumont  of work has been done. 
 In the following we will make a brief history highlighting the results which are much closer to the problem we wish to study in the present paper.
 \\
 
 It is well known that existence and multiplicity of positive and sign-changing solutions to \eqref{bn} is strictly affected by the geometry of the domain, the dimension of the euclidean space where the domain lies and the values of the parameter $\lambda$.
For example, if $\lambda\le 0,$ $V$  is constant and $\Omega$ is a starshaped domain, problem \eqref{bn} does not have any solutions.
\\
A particular feature of problem \eqref{bn}, due to the critical behaviour of the non-linearity which appears on the R.H.S., is the  possible existence of solutions which  blow-up at one or more points in the domain as the parameter $\lambda$ approaches $0$ in dimensione $n\ge4$ or  a positive number $\lambda_*$ in dimension $n=3.$ The description of the profile of the positive blowing-up solutions has been the subject of a wide literature.
Starting from the pioneering paper by Brezis and Peletier \cite{bre-pe} where the authors consider the radial case, a lot of results have been obtained concerning the asymptotic profiles of solutions to \eqref{bn}. When $n\ge4$, the first result is due to Han \cite{han} and Rey \cite{rey1} who study the profile of one-peak solutions in a general domain when the potential V is constant, while
 the most recent one  has been obtained by Konig and Laurin \cite{ko-lau1} who give an accurate description of the blow-up profile of a  solution with multiple blow-up points in the non-autonomous case.
The 3-dimensional case has been firstly faced by Druet \cite{dru} in the case of one blow-up point and very recently by Konig and Laurain \cite{ko-lau2} in the presence of multiple blow-up points.\\
In particular,   the asymptotic analysys of blowing-up solutions   ensures that the blow-up points are nothing but  the critical points of a suitable function which involves the function V, the  Green's function of $-\Delta$ in $\Omega$ with Dirichlet boundary condition and the Robin's function.\\
While the behaviour of blowing-up positive solutions is by now clear, the  profile of the sign-changing solutions is far being completely understood. As far as we know the complete scenario of sign-changing solutions  has been obtained only in the radial case by Esposito, Ghossoub, Pistoia and Vaira in \cite{es-gho-pi-va} in dimensions $n\ge7$ and by Amadori, Gladiali, Grossi, Pistoia and Varia \cite{aggpv}
  in lower dimensions.
 \\

A  parallel and prolific field of research consists in finding blowing-up  solutions whose profile is predicted by the asymptotic analysis developed in the previous papers. The first result is due to Rey \cite{rey1}
who build solutions blowing-up at a non-degenerate critical point of the Robin function when $V$ is constant and $n\ge5.$ Successively, positive solutions blowing-up at multiple points   have been constructed by Musso and Pistoia  \cite{mu-pi}.  If $n\ge5$ in the  non-autonomous case, solutions with one blow-up point  have been found by Pistoia and Molle \cite{mo-pi}, while  Micheletti and Pistoia  \cite{mi-pi} build positive and sign-changing solutions with multiple blow-up points. \\
While all the blow-up points of positive solutions are always  isolated and simple, in high dimensions $n\ge7$ sign-changing solutions  can have
 towering blow-up points 
(see Premoselli \cite{pre2} and Morabito-Pistoia-Vaira \cite{mo-pi-va})  and clustering blow-up points (see Pistoia Vaira \cite{pi-va1}). \\
In the 3-dimensional case, when $V$ is constant,  single and multiple blowing-up  solutions  have been built  by del Pino, Dolbeault and Musso \cite{de-do-mu} and Musso and Salazar 
\cite{mu-sa}, respectively.
\\

Even if the problem has been widely studied in the last decades and a huge number of results has been produced, many problems are still open.

Here we focus on the problem \eqref{bn} in dimension $n=4$: 
	\begin{equation}
		\begin{cases}
			-\Delta u   = u^3 + \eps V u &\mbox{in } \Omega\\
			u=0	&\mbox{on } \partial \Omega
		\end{cases}
		\label{prob}
	\end{equation}
	 when $\Omega$ is a bounded regular domain in $\RR$, $V\in C^1(\Omega)\cap C^0(\overline\Omega)$ 
 and we  prove the existence of solutions which blows-up at a single point as the parameter $\epsilon$ approaches zero.
\\
More precisely, if $\tau$ denotes the Robin function, our main result reads as follows.

\begin{thm}\label{main}
Let ${\xi}_0\in\Omega$ be a non-degenerate critical point of 
$f(\xi):={\tau(\xi)\over V(\xi)}$
with $V(\xi_0)>0.$\\
If $\epsilon$ is small enough there exists a solution of problem
 \eqref{prob} which blows-up  at the point $\xi_0$ as $\eps \to 0 $.
\end{thm}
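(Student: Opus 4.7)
The natural approach is a Lyapunov--Schmidt finite-dimensional reduction. I would look for a solution of the form $u=\PU+\phi$, where $\U(x)=\sqrt{8}\,\de/(\de^2+|x-\xi|^2)$ is the Aubin--Talenti bubble on $\RR$, $\PU$ denotes its $H^1_0(\Om)$-projection, and the remainder $\phi$ lies in the orthogonal complement of the approximate kernel $K_{\de,\xi}:=\vspan\{\po,\pj:j=1,\dots,4\}$ generated by $\po=\partial_\de \PU$ and $\pj=\partial_{\xi_j}\PU$. A peculiar feature of dimension four is the scaling $\de^2|\log\de|\sim\e$, i.e.\ $\de$ decays essentially exponentially in $1/\e$; it is convenient to parametrize $\de$ so that $\e|\log\de|$ varies in a compact subset of $(0,\infty)$ while $\xi$ varies near $\xi_0$.

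Two analytic inputs drive the argument. First, using the standard Green function expansion one writes, near the concentration point, $\PU(x)=\U(x)-\sqrt{8}\,\de\,\tau(\xi)+\text{l.o.t.}$, which yields sharp $L^\infty$ (or dual $L^{4/3}$) estimates on the error $\E:=-\De\PU-\U^3-\e V\PU$ of the expected logarithmic order. Second, one proves invertibility of the linearization $\L:=-\De-3\U^2-\e V$ on $K_{\de,\xi}^\perp$, with operator norm uniform in $(\de,\xi)$ in the admissible range. A standard contraction argument then produces a unique $\phi_{\de,\xi}\in K_{\de,\xi}^\perp$ solving the nonlinear equation projected onto $K_{\de,\xi}^\perp$, of the same order as $\E$ and with $C^1$-dependence on $(\de,\xi)$.

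The reduction step replaces \eqref{prob} by the search for critical points of $J_\e(\de,\xi):=I_\e(\PU+\phi_{\de,\xi})$, where $I_\e$ is the Euler functional associated with \eqref{prob}. A careful expansion in dimension four should yield
\begin{equation*}
J_\e(\de,\xi)=a_0+a_1\de^2\tau(\xi)-a_2\,\e\de^2|\log\de|\,V(\xi)+o\bigl(\e\de^2|\log\de|\bigr),
\end{equation*}
with universal positive constants $a_0,a_1,a_2$. The equation $\partial_\de J_\e=0$ forces $\e|\log\de|\approx (a_1/a_2)\,f(\xi)$, and substituting this into $\nabla_\xi J_\e=0$ gives at leading order the system $\nabla f(\xi)=0$. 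Together with $V(\xi_0)>0$ (which keeps $\de>0$ along the critical manifold), the non-degeneracy hypothesis on $\xi_0$ makes the limiting system non-degenerate, so that the implicit function theorem (or a local Brouwer-degree argument) yields a critical point $(\de_\e,\xi_\e)\to(0,\xi_0)$ of $J_\e$ for every sufficiently small $\e$, hence the desired blowing-up solution.

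The main obstacle is precisely this four-dimensional logarithmic scaling. In contrast with $n\geq5$, where the potential enters at the algebraic order $\e\de^2$, here the interaction between $V$ and the Robin function $\tau$ takes place at the delicate order $\e\de^2|\log\de|$, which forces one to track logarithmic corrections throughout: in the pointwise and integral estimates on $\E$ and $\phi_{\de,\xi}$, in the expansion of $\PU-\U$ via $\tau$, and in the $C^1$-expansion of $J_\e$ needed to locate its critical points uniformly in $(\de,\xi)$ within the admissible regime $\e|\log\de|\in[c_1,c_2]$.
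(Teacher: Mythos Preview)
Your Lyapunov--Schmidt setup is exactly what the paper does in Sections~2--3: the ansatz $u=P\U+\phi$, the linear theory on $K_{\de,\xi}^\perp$, and the fixed-point argument producing $\phi_{\de,\xi}$ with $\|\phi\|\lesssim \de^2+\e\de$ all match. The divergence comes at the reduced step, and here there is a genuine gap in your proposal.

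You plan to expand the reduced energy $J_\e(\de,\xi)=I_\e(P\U+\phi_{\de,\xi})$ to $C^1$-precision $o(\de^2)$ and locate critical points. The paper explicitly asserts that this does \emph{not} go through in dimension four: ``the rate of the error term is not small enough to argue as in the higher dimensional case.'' In the relevant regime $\e|\log\de|\sim 1$ one has $\|\phi\|\sim\e\de\sim\de/|\log\de|$, and controlling the $\phi$-contributions to $\partial_\de J_\e$, $\nabla_\xi J_\e$ at order $o(\de^2)$ is not a consequence of this $H^1$-bound alone; for instance, the integration-by-parts producing the linear-in-$\phi$ term leaves a boundary integral $\int_{\partial\Om}\psi^0_{\de,\xi}\,\partial_\nu\phi$ with $\psi^0_{\de,\xi}=\O(\de)$, and bounding $\partial_\nu\phi$ on $\partial\Om$ requires the refined Rey estimate $\int_{\partial\Om}|\partial_\nu\phi|^2=o(\de^2)$, not just $\|\phi\|_{H^1_0}$. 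You correctly identify this as the main obstacle, but you treat it as a matter of bookkeeping; the paper treats it as the point requiring a new idea.

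The paper's replacement for the energy expansion is as follows. The equation $c^0=\dots=c^4=0$ is reduced (Lemma~4.1) to two sets of scalar identities: testing against $\psi^0_{\de,\xi}$ on $\Om$ for the concentration rate, and a \emph{local Pohozaev identity}, i.e.\ testing against $\partial_j u$ on a small ball $B(\xi,\eta)$, for the location. The Pohozaev identity turns the $\xi$-equations into boundary integrals on $\partial B(\xi,\eta)$, where a coarea/mean-value argument selects a good radius $\eta$ on which $|\nabla\phi|^2+|\phi|^4$ is controlled; this circumvents the need for a $C^1$ energy expansion. The resulting reduced system, after setting $\de=e^{-t/\e}$, is
\[
c\,\tau(\xi)-t\,V(\xi)+o(1)=0,\qquad c\,\nabla\tau(\xi)-t\,\nabla V(\xi)+o(1)=0,
\]
and a Brouwer-degree computation (showing $\det F'(t_0,\xi_0)=-c^2 V(\xi_0)^3\tau(\xi_0)\det D^2f(\xi_0)\neq 0$) closes the argument. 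So your target system and degree conclusion are correct, but the mechanism for reaching them is different from, and was chosen precisely to avoid, the $C^1$ energy expansion you propose.
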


The proof relies on a classical Ljapunov-Schmidt procedure. The setting of the problem (see Section \ref{2}) and the reduction process
(see Section \ref{3}) can be carried out as usual. However, the last step in the procedure   needs  new ideas. 
 In fact, the rate of the error term  is not small enough to argue as in the   higher dimensional case and 
   the reduced problem is solved using some
local Pohozaev identities (see Section \ref{4}).
\\

Remarkably, differently from solutions with one blow-up point,  in the  case of multiple blow-up points it is  harder to derive the concentration speed.
Indeed, it seems that they appear at the second order expansion   and so to catch them a more accurate description of the ansatz is needed.
This will be the topic of a forthcoming paper in collaboration with Monica Musso.

	\section{Setting of the problem}\label{2}
	\subsection{The bubbles}
		
All the positive solutions to the limit problem
	\begin{equation*}
		-\Delta U = U^3 \mbox{ in } \RR
		\label{limprob}
	\end{equation*}
	are the so called {\em bubbles} (see \cite{au,ta})
	\begin{equation*}\label{bu1}
 	U_{\delta, \xi} (x) = \frac1\delta U\(x-\xi\over\delta\),\  x,\xi \in \mathbb{R}^4,\  \delta>0	\end{equation*}
 	where
\begin{equation*}\label{bu2}
 U(x):= \mathfrak c \frac{1}{1 + |x |^2},\ \mathfrak c:=2\sqrt{2}.\end{equation*}

	It is useful to introduce the projection of the bubble $U_{\delta,\xi}$ onto $H^1_0(\Omega)$, namely the solution of the problem
$$
		-\Delta P U_{\delta,\xi} =-\Delta U_{\delta,\xi}\ \mbox{ in } \Omega,\ 		PU_{\delta,\xi}=0\ \mbox{ on } \partial \Omega
.$$
	
	Let  $G$ be the Green function of $-\Delta$ on $H_0^1(\Om)$ and $H$ be its regular part, i.e.
	$$ G(x,y) =\frac1{2\omega}\frac{1}{|x-y|^2}-H(x,y) $$
	where    $\omega $ denotes the measure of the unit sphere $S^3 \subset \RR$.
	The Robin function  is defined as $\tau(x) = H(x,x) .$\\

It is well known that
	\begin{equation*}\label{pud1}
PU_{\delta,\xi}(x)=U_{\delta,\xi}(x)-  \mathfrak C \de H(x, \xi)+\mathcal O(\delta^3),\  \mathfrak C :=2\mathfrak c\omega\end{equation*}
  uniformly with respect to $x\in\Omega$ and $\xi$ in compact sets of $\Omega$
  and
\begin{equation*}\label{pud2}
		 PU_{\delta, \xi}(x)= \mathfrak C \de G(x,\xi)+ \mathcal O(\delta^3)\end{equation*}
  uniformly  with respect to  $x$ in compact sets of $\Omega\setminus\{\xi\}$ and $\xi$ in compact sets of $\Omega$
	\subsection{Some background material}
	Let $H^1_0(\Omega)$ be the Hilbert space  equipped with the usual inner product and the usual norm $$		\l u,v\r= \int_{\Omega} \nabla u \cdot \nabla v
\ \hbox{and}\
		\|u\|:=\|u\|_{H^1_0(\Om)} = \left( \int_{\Omega} |\nabla u|^2 \right)^{1/2}.$$
	For $r \in [1,+\infty)$ the space $  L^r(\Om)$ is also equipped with the standard norm
	\begin{equation*}
		\|u\|_r=\left(\int_{\Omega} |u|^r\right)^{\frac{1}{r}} .
	\end{equation*}
	
Now, let us introduce ${\mathtt i}^*:L^{4/3}(\Om) \to H^1_0(\Om)$ as the adjoint operator of the embedding $\mathtt i: H^1_0(\Om) \hookrightarrow L^4(\Om)$, i.e.
		 $ u={\mathtt i}^*(f) $ if and only if $$\l u,\varphi\r = \int_\Om f(x) \varphi(x) d x \mbox{ for all } \varphi\in H^1_0(\Om) $$ or equivalently  
	$$	-\Delta u=f  \mbox{ in } \Om,\  u=0  \mbox{ on } \partial\Om$$
The operator ${\mathtt i}^*:L^{4/3}(\Om)\to H^1_0(\Om)$ is continuous as $$\|{\mathtt i}^*(f)\|_{H^1_0(\Om)}\leq S^{-1 }\|f\|_{4/3}$$ where $S$ is the best constant for the Sobolev embedding.
\\

Therefore,
  the problem \eqref{prob} can be rewritten as \beq\label{eq2}
		u={\mathtt i}^*(u^3+\eps V u),  \quad u \in H^1_0(\Omega) .\eeq  
	
\subsection{The ansatz}
 		Let   $k \geq 1$ be a fixed integer.  We look for a solution to \eqref{prob} of the form 
		\beq \label{ans} u=U_{\de,\xi}+\phi, \eeq where $U_{\de,\xi}=   PU_{\de,\xi} $ whose
	  blow-up rate is $\de=\de(\eps) \to 0$ and  blow-up point is $\xi = \xi(\eps) \in \Om$. 
		Moreover, the lower order term 
	$ 	\phi $    satisfies a set of orthogonal conditions. More precisely, let us consider the linear problem
	$$-\Delta\psi=3U ^2\psi \mbox{ in } \RR.$$ 
	It is well known \cite{bia-eg} that the set of solutions  is a $5-$dimensional linear space spanned by 	
$$ \psi^0 (x):= U(x)+\frac 12 \nabla U(x)\cdot x=-\mathfrak c\frac{1-|x |^2 }{(1+|x |^2)^2} $$ and 
$$  \psi ^j(x):= \frac{\partial U}{\partial x_j}(x)=-2\mathfrak c  \frac{x_j}{(1+|x |^2)^2}, \ j=1,\dots,4.$$
	For $j=0,1,2,3,4$ we set 
	$$\psi_{\de,\xi}^j(x)=\frac1{\de}\psi^j\(x-\xi\over\delta\).$$
	We introduce their projections $P\psi_{\de,\xi}^j={\mathtt i}^*(3\U^{2}\psi_{\de,\xi}^j)$ onto $H^1_0(\Omega),$ namely the solutions to the problem
	$$
		-\Delta P\psi_{\de,\xi}^j =-\Delta \psi_{\de,\xi}^j=3U_{\de,\xi} ^2 \psi_{\de,\xi}^j\ \hbox{in}\ \Omega,\ 		P\psi_{\de,\xi}^j =0\ \hbox{in}\  \partial \Omega
.$$
	Finally, we introduce the linear space
	$$K_{{\de},{\xi}}=  \vspan \{ P\psi_{\de,\xi}^j\ |\  j=0,\cdots,4\}$$ 
	and its orthogonal space $$K_{\de,\xi}^\perp=\{\phi\in H_0^1(\Om) : \l\phi,P\psi_{\de,\xi}^j\r=0, \mbox{ for all }  j=0,\cdots,4\} .$$  
The function $\phi$ belongs to the space $ K_{\de,\xi}^\perp.$

It is useful to remind the well known properties
$$P\psi^j_{\delta, \xi}=\psi^j_{\delta,\xi}-\delta^{2}\mathfrak C\partial_{\xi_j}H(x,\xi)+\mathcal O(\delta^{3}),\quad j=1, \ldots, 4$$
uniformly for $x\in\Omega$ and $\xi$ in compact	sets of $\Omega$
and
	$$P\psi^0_{\delta, \xi}=\psi^0_{\delta,\xi}-\delta \mathfrak C H(x,\xi)+\mathcal O(\delta^{2})$$ 
	uniformly for $x $  in compact	sets of $\Omega\setminus\{\xi\}$ and $\xi$ in compact	sets of $\Omega.$

\subsection{An equivalent system}
Let us introduce the linear projection
  $ \Pi_{{\de},{\xi}}:K\to K$ and $ \Pi_{{\de},{\xi}}^\perp:K^\perp\to K^\perp$ which are defined by $$\Pi_{{\de},{\xi}}(\phi)=\sum_{j=0,\cdots,4}\l\phi,P \psi_{\delta,\xi}^j\r P \psi_{\delta,\xi}^j \qquad \hbox{and}\qquad \Pi_{{\de},{\xi}}^\perp(\phi) =\phi-\Pi_{{\de},{\xi}}(\phi) .$$
Equation \eqref{eq2} can be rewritten as the following system of two equations
	\beq 	\label{equ1}
	\Pi_{{\de},{\xi}}^\perp\big[\L_{\de,\xi}(\phi) -\E-\N(\phi)\big] =0\eeq 
	and
	\beq 	\label{equ2}
	\Pi_{{\de},{\xi}} \big[\L_{\de,\xi}(\phi) -\E-\N(\phi)\big] =0,\eeq 
	where the linear operator $\L_{\de,\xi}$ is  
$$\L_{\de,\xi} (\phi)=
	 \phi -{\mathtt i}^*\left(3\phi W_{{\de},{\xi}}^2+\eps\phi\right) ,$$
		the error term $\E$ is 
$$	\E=  {\mathtt i}^*\left(W_{{\de},{\xi}}^3  +\eps W_{{\de},{\xi}}\right)-W_{{\de},{\xi}} $$
and the nonlinear term 	$\N$ is
$$\N(\phi) = {\mathtt i}^*\left(\phi^3+3\phi^2 W_{{\de},{\xi}}\right) .$$

\section{Solving equation (\ref{equ1})}\label{3}
Given $\rho>0$   small,  let
$	\O_\rho= \left\{ {\xi} \in \Om\  | \ \dist(\xi,\partial\Om)\geq \rho \right\}. $	

First of all, we estimate the error term $\E.$  
	\begin{lem} \label{err1 lem} For any $\rho >0$ small enough there exist $\eps_0>0$ and $c>0$ such that for any $\xi \in \O_\rho$ and for any $\eps,\delta\in (0,\eps_0)$ it holds
	$$
 		\|\E\| \lesssim 
		| {\de} |^2  + \eps | {\de} |  .$$
 	\end{lem}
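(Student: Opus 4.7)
The plan is to exploit the continuity of ${\mathtt i}^*$ from $L^{4/3}(\Omega)$ to $H^1_0(\Omega)$ and then reduce everything to $L^{4/3}$ estimates on expressions involving the bubble $U_{\de,\xi}$.

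First, I would rewrite $\E$ in a useful form. Since $PU_{\de,\xi}$ satisfies $-\Delta PU_{\de,\xi}=-\Delta U_{\de,\xi}=U_{\de,\xi}^3$ in $\Omega$, we have $W_{\de,\xi}=PU_{\de,\xi}={\mathtt i}^*(U_{\de,\xi}^3)$. Therefore
\[
\E={\mathtt i}^*\bigl(W_{\de,\xi}^3-U_{\de,\xi}^3+\eps W_{\de,\xi}\bigr),
\]
and by the continuity of ${\mathtt i}^*$,
\[
\|\E\|\le S^{-1}\bigl(\|W_{\de,\xi}^3-U_{\de,\xi}^3\|_{4/3}+\eps\|W_{\de,\xi}\|_{4/3}\bigr).
\]
So the task reduces to bounding these two $L^{4/3}$-norms.

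For the first term, I would use the expansion $W_{\de,\xi}=U_{\de,\xi}-\mathfrak C\de H(x,\xi)+\O(\de^3)$ stated in the excerpt, together with the elementary pointwise bound $|a^3-b^3|\lesssim|a-b|(a^2+b^2)$ and its refinement using a Taylor expansion, so the leading contribution is $3U_{\de,\xi}^2(W_{\de,\xi}-U_{\de,\xi})$. Then
\[
\|W_{\de,\xi}^3-U_{\de,\xi}^3\|_{4/3}\lesssim \de\,\|U_{\de,\xi}^2\|_{4/3}+\text{(lower order)}.
\]
A standard change of variables $y=(x-\xi)/\de$ gives $\|U_{\de,\xi}^2\|_{4/3}=(\int U_{\de,\xi}^{8/3})^{3/4}\sim\de$ (the integral $\int_{\R^4}(1+|y|^2)^{-8/3}dy$ truncated to $(\Omega-\xi)/\de$ produces the factor $\de^{8/3}\cdot\de^{-4/3}$). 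Hence $\|W_{\de,\xi}^3-U_{\de,\xi}^3\|_{4/3}\lesssim\de^2$; the remainder terms coming from the $\O(\de^3)$ in the expansion of $W_{\de,\xi}$ and from the quadratic/cubic correction in the algebraic identity contribute terms of order $\de^3$ or better, hence are negligible.

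For the second term, the same rescaling argument, now applied to $U_{\de,\xi}^{4/3}$ over $\Omega$, yields $\|U_{\de,\xi}\|_{4/3}\sim\de$ (the integrand is integrable both near the concentration point and away from it, producing a factor $\de^{4/3}$). Since $H(\cdot,\xi)$ is uniformly bounded on $\O_\rho$, it follows that $\|W_{\de,\xi}\|_{4/3}\lesssim\de$, so $\eps\|W_{\de,\xi}\|_{4/3}\lesssim\eps\de$. Summing the two contributions gives the desired bound $\|\E\|\lesssim\de^2+\eps\de$.

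I do not anticipate a genuine obstacle here, as the estimate is routine. The most delicate point is simply to track carefully that in dimension $4$ the borderline behaviour of $U_{\de,\xi}^{4/3}$ at infinity is tamed by the boundedness of $\Omega$, which provides the cutoff at scale $1/\de$ yielding the $\de$ factor; a careless change of variables would suggest a divergence, so one must integrate over $(\Omega-\xi)/\de$ rather than all of $\R^4$.
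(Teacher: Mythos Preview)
Your proposal is correct and follows essentially the same route as the paper's proof: rewrite $\E={\mathtt i}^*\bigl((PU_{\de,\xi})^3-U_{\de,\xi}^3+\eps V\,PU_{\de,\xi}\bigr)$, use the continuity of ${\mathtt i}^*$, expand the cubic difference using $PU_{\de,\xi}-U_{\de,\xi}=-\mathfrak C\de H(\cdot,\xi)+\O(\de^3)$, and then invoke the scaling estimates $\|U_{\de,\xi}^2\|_{4/3}=\O(\de)$ and $\|U_{\de,\xi}\|_{4/3}=\O(\de)$. Your remark that the second estimate relies on the boundedness of $\Omega$ (since $U^{4/3}\notin L^1(\R^4)$) is exactly the subtle point; the paper simply records these bounds without spelling out the rescaling.
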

 	\begin{proof}  
				First of all, we remark that
				$W_{{\de},{\xi}}={\mathtt i}^*\left(  \U^3\right).$ Then by a straightforward computation  
		\begin{align*}
			\|\E\| &\lesssim   \| (  P\U)^3- 
			\U^3\|_{\frac 43} + \eps \|V\|_\infty\|P\U\|_{\frac 43}  	\end{align*}
	where 
	\begin{align*}
		\|P\U^3-\U^3\| &\lesssim \de  \|\U^2\|_{\frac 43} + \de^2\|\U\|_{\frac 43} + \de^3\lesssim \de^2 .
	\end{align*}
and  $\|P\U\|_{\frac 43}= \O(\de)$.	\end{proof}
Next, we state the invertibility of the linear operator $\L_{\de,\xi}.$
	\begin{lem}\label{L inv prop}
		For any $\rho>0$ small enough there exist $\eps_0>0$ and $c>0$ such that for any $\eps,\delta\in (0,\eps_0),$ $i=1,\cdots,k$, and for any ${\xi} \in\O_\rho$
			$$\|\(\Pi^\perp_{\de,\xi} \circ \L_{\de,\xi}\)(\phi)\|\geq c\|\phi\| \mbox{ for all } \phi \in K_{\de,\xi}^\perp .$$
		Furthermore, the operator $\Pi^\perp_{\de,\xi} \circ \L_{\de,\xi}$ is invertible and its inverse is continuous.
	\end{lem}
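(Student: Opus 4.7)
The plan is to argue by contradiction along the now-standard blow-up scheme for Ljapunov--Schmidt reductions in the critical case. Assume that the claim fails: there exist sequences $\eps_n, \de_n \to 0^+$, $\xi_n \in \O_\rho$ (with $\xi_n \to \xi_*\in\overline{\O_\rho}$ up to subsequences), and $\phi_n \in K_{\de_n,\xi_n}^\perp$ with $\|\phi_n\|=1$ such that $h_n := (\Pi^\perp_{\de_n,\xi_n}\circ\L_{\de_n,\xi_n})(\phi_n)$ satisfies $\|h_n\|\to 0$. Writing the equation $\L_{\de_n,\xi_n}(\phi_n) = h_n + \sum_{j=0}^{4} c_n^j\,P\psi^j_{\de_n,\xi_n}$ for some Lagrange multipliers $c_n^j$, I would first show $c_n^j\to 0$ by testing with $P\psi^j_{\de_n,\xi_n}$, using $\|P\psi^j_{\de_n,\xi_n}\|^2 = O(1)$, the orthogonality $\l\phi_n,P\psi^j_{\de_n,\xi_n}\r = 0$, the estimates $\|P\U\|\lesssim 1$, $\eps_n\|\phi_n\|\to 0$, and the nondegeneracy/linear independence of the $P\psi^j_{\de_n,\xi_n}$ after rescaling.

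Next I would perform the standard blow-up rescaling $\tilde\phi_n(y) := \de_n\,\phi_n(\xi_n + \de_n y)$ on the expanding domains $\widetilde\Om_n := \de_n^{-1}(\Om-\xi_n)\uparrow\R^4$. Since this rescaling is an isometry between $H^1_0(\Om)$ and $D^{1,2}_0(\widetilde\Om_n)$, one has $\|\tilde\phi_n\|_{D^{1,2}} = 1$, hence, up to subsequences, $\tilde\phi_n \rightharpoonup \tilde\phi$ weakly in $D^{1,2}(\R^4)$ and strongly in $L^4_{loc}$. Passing to the limit in the equation rewritten as
\beq\label{blowup-eq}
\phi_n \;=\; {\mathtt i}^*\!\bigl(3\phi_n W_{\de_n,\xi_n}^2 + \eps_n\phi_n\bigr) \;+\; h_n \;+\; \sum_{j=0}^{4} c_n^j P\psi^j_{\de_n,\xi_n},
\eeq
and using that $W_{\de_n,\xi_n}^2$ rescales to $U^2$ and that $\eps_n\to 0$, the limit $\tilde\phi$ is a finite-energy solution of $-\Delta\tilde\phi = 3U^2\tilde\phi$ in $\R^4$, hence belongs to the five-dimensional kernel $\vspan\{\psi^0,\psi^1,\ldots,\psi^4\}$.

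I would then exploit the orthogonality conditions to kill $\tilde\phi$: since $\l\phi_n,P\psi^j_{\de_n,\xi_n}\r=0$ and $P\psi^j_{\de_n,\xi_n}$ rescales (up to negligible remainders involving $H$, controlled by the expansions stated in the excerpt) to $\psi^j$, one obtains in the limit $\int_{\R^4}\nabla\tilde\phi\cdot\nabla\psi^j = 0$ for $j=0,\dots,4$, forcing $\tilde\phi\equiv 0$. Strong convergence to $0$ on compacta (combined with a standard decay estimate of $W_{\de_n,\xi_n}^2$ outside shrinking balls around $\xi_n$) yields $\|{\mathtt i}^*(3\phi_n W_{\de_n,\xi_n}^2)\|\to 0$; together with $\eps_n\|\phi_n\|\to 0$, $\|h_n\|\to 0$ and $c_n^j\to 0$, identity \eqref{blowup-eq} gives $\|\phi_n\|\to 0$, contradicting $\|\phi_n\|=1$. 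The invertibility of $\Pi^\perp_{\de,\xi}\circ\L_{\de,\xi}$ on $K_{\de,\xi}^\perp$ then follows from the a priori estimate just proved together with a Fredholm-alternative/perturbation-of-identity argument, since $\L_{\de,\xi}$ is a compact perturbation of the identity.

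The main obstacle is the passage to the limit in the orthogonality conditions for $\tilde\phi$: the functions $P\psi^0_{\de_n,\xi_n}$ carry a logarithmic-type correction through $H(\,\cdot\,,\xi_n)$ which in dimension four is genuinely of the same order as $\de_n$, so a careful use of the expansions $P\psi^j_{\de_n,\xi_n} = \psi^j_{\de_n,\xi_n} + O(\de_n)$ (with $j\geq 1$) and $P\psi^0_{\de_n,\xi_n} = \psi^0_{\de_n,\xi_n} - \de_n\mathfrak C H(\cdot,\xi_n)+\O(\de_n^2)$ is needed to check that the non-local tail gives no contribution in the limit; everything else reduces to bookkeeping and the continuous embedding $\|{\mathtt i}^*(f)\|\lesssim\|f\|_{4/3}$.
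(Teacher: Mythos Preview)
Your argument is correct and is precisely the standard contradiction/blow-up scheme that the paper invokes: the paper does not give a self-contained proof but simply refers to Lemma~1.7 in \cite{mu-pi}, whose proof is exactly the mechanism you outline (contradiction, Lagrange multipliers vanish, rescale, identify the limit via Bianchi--Egnell nondegeneracy, use the orthogonality to force the limit to zero, then reach a contradiction via compactness of the operator). One minor imprecision: the correction in $P\psi^0_{\de_n,\xi_n}$ is of order $\de_n$ (through $H(\cdot,\xi_n)$), not logarithmic; the logarithm only appears in the energy expansion, not here, so the passage to the limit in the orthogonality conditions is in fact routine.
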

	\begin{proof} We omit the proof because  it is enough to apply the arguments used in the proof of Lemma 1.7 in \cite{mu-pi} to the $4-$dimensional case.
	\end{proof}
	
	Finally, using a classical fixed point argument, we can solve equation \eqref{equ1}.

\begin{prop} \label{phi norm prop}For any  $\rho>0$ small enough there exists $\eps_0>0$  such that
for any $\eps,\delta \in (0,\eps_0)$  and $\xi \in \O_\rho$, there exists a unique $\phi = \phi_{\de,\xi}\in K_{\de,\xi}^\perp$ solving
\eqref{equ1}, which also satisfies
\beq \|\phi\|\lesssim  {\de}^2 +\eps {\de}  . \label{phi norm}\eeq
  \end{prop}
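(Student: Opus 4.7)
The plan is to recast \eqref{equ1} as a fixed-point equation and apply the contraction mapping principle in a small ball of $K_{\de,\xi}^\perp$. By Lemma \ref{L inv prop}, the operator $\Pi^\perp_{\de,\xi}\circ\L_{\de,\xi}$ is invertible on $K_{\de,\xi}^\perp$ with inverse bounded uniformly in $(\de,\xi)$, so \eqref{equ1} is equivalent to the fixed-point problem
\begin{equation*}
\phi=\T(\phi):=\bigl(\Pi^\perp_{\de,\xi}\circ\L_{\de,\xi}\bigr)^{-1}\,\Pi^\perp_{\de,\xi}\bigl[\E+\N(\phi)\bigr],\qquad \phi\in K_{\de,\xi}^\perp.
\end{equation*}

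The first step is to control the nonlinear term $\N$ in the $H^1_0$-norm. Since ${\mathtt i}^*\colon L^{4/3}(\Om)\to H^1_0(\Om)$ is continuous and $\|W_{{\de},{\xi}}\|_4\lesssim 1$ uniformly for $\xi\in\O_\rho$, H\"older's inequality (with exponents $(2,4)$ in the mixed term) combined with the Sobolev embedding $H^1_0(\Om)\hookrightarrow L^4(\Om)$ yields
\begin{equation*}
\|\N(\phi)\|\lesssim \|\phi^3\|_{4/3}+\|W_{{\de},{\xi}}\phi^2\|_{4/3}\lesssim \|\phi\|_4^3+\|W_{{\de},{\xi}}\|_4\|\phi\|_4^2\lesssim \|\phi\|^3+\|\phi\|^2.
\end{equation*}
An analogous computation gives the Lipschitz bound on bounded sets
\begin{equation*}
\|\N(\phi_1)-\N(\phi_2)\|\lesssim \bigl(\|\phi_1\|+\|\phi_2\|+\|\phi_1\|^2+\|\phi_2\|^2\bigr)\|\phi_1-\phi_2\|.
\end{equation*}

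Combining these with the error bound $\|\E\|\lesssim \de^2+\eps\de$ from Lemma \ref{err1 lem}, and using that $\Pi^\perp_{\de,\xi}$ has operator norm at most one together with the uniform estimate on $(\Pi^\perp_{\de,\xi}\circ\L_{\de,\xi})^{-1}$, I would set $r:=M(\de^2+\eps\de)$ for a large constant $M$ and work in the ball $B_r:=\{\phi\in K_{\de,\xi}^\perp\,:\,\|\phi\|\le r\}$. For $\phi,\phi_1,\phi_2\in B_r$ one obtains
\begin{equation*}
\|\T(\phi)\|\le C\bigl(\|\E\|+\|\N(\phi)\|\bigr)\le C\bigl(\de^2+\eps\de+r^2+r^3\bigr)\le r,
\end{equation*}
\begin{equation*}
\|\T(\phi_1)-\T(\phi_2)\|\le C\bigl(r+r^2\bigr)\|\phi_1-\phi_2\|\le \tfrac12\|\phi_1-\phi_2\|,
\end{equation*}
provided $M$ is large enough and $\eps,\de\in(0,\eps_0)$ are taken small (so that $r\to 0$). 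The Banach fixed-point theorem then furnishes a unique $\phi=\phi_{\de,\xi}\in B_r$ solving \eqref{equ1}, and the defining inclusion $\phi\in B_r$ is precisely the bound \eqref{phi norm}.

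No genuine obstacle is expected: the invertibility of $\L_{\de,\xi}$ and the error estimate have already been established in Lemmas \ref{L inv prop} and \ref{err1 lem}, and the nonlinear estimate on $\N$ in 4D is routine via H\"older--Sobolev (the critical exponent~$4$ and the dual exponent $4/3$ fit together cleanly). The smoothness of the map $(\de,\xi)\mapsto\phi_{\de,\xi}$, which will be needed to reduce \eqref{equ2} to a finite-dimensional problem in the next section, follows from the implicit function theorem applied to the fixed-point equation.
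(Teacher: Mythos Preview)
Your proposal is correct and is precisely the ``classical fixed point argument'' the paper invokes without details: recast \eqref{equ1} via Lemma~\ref{L inv prop} as a fixed-point equation, control $\N$ by H\"older--Sobolev, feed in the error bound of Lemma~\ref{err1 lem}, and contract in a ball of radius $\sim\de^2+\eps\de$. The only caveat is that the uniqueness delivered by the contraction mapping is within that ball (i.e.\ among $\phi$ satisfying \eqref{phi norm}), which is the standard reading of the statement in this setting.
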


\section{Solving equation (\ref{equ2})}\label{4}
 Let $u=U_{\de,\xi}+\phi$ (see \eqref{ans}). By  \eqref{equ1} we deduce that
 \begin{equation}\label{cij}-\Delta u-u^3-\epsilon Vu=\sum\limits_{j=0,\dots,4} c^j \U^2\psi_{\de,\xi}^j\end{equation}
for some real numbers $c^j= c^j(\de,\xi)$. Then
solving equation \eqref{equ2} is equivalent to find $(\de,\xi)$ such that the $c^j$'s are zero.\\

First of all, we prove a sufficient condition which ensures that all the $c^j$'s in \eqref{cij} are zero.
Set $\partial_j u:={\partial u\over\partial x_j}.$

\begin{lemma}\label{lemma1}
If 
\begin{equation}\label{ridotto1}
\int\limits_{\Omega}(-\Delta u-u^3-\epsilon V u)\psi_{\de,\xi}^0 =0
\end{equation}
and for some $\eta>0$
\begin{equation}\label{ridotto2}
\int\limits_{B(\xi,\eta)}(-\Delta u-u^3-\epsilon Vu)\partial_j u=0,\quad \mbox{ for all } j=1,2,3,4\end{equation}
then  $c^\ell=0$ for any $\ell=0,1,\dots,4.$

\end{lemma}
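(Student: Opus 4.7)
The plan is to obtain a homogeneous linear system for $(c^0,\ldots,c^4)$ by testing equation \eqref{cij} against the five functions appearing in the hypotheses, and to show the coefficient matrix is invertible. Specifically, multiplying \eqref{cij} by $\psi^0_{\de,\xi}$ and integrating over $\Omega$, and by $\partial_j u$ and integrating over $B(\xi,\eta)$ for $j=1,\ldots,4$, yields (using \eqref{ridotto1}--\eqref{ridotto2} to kill the left-hand sides)
$$ \sum_{\ell=0}^{4} M_{k\ell}\, c^\ell = 0,\qquad k=0,\ldots,4,$$
where
$$ M_{0\ell}=\int_\Omega U_{\de,\xi}^2\psi^\ell_{\de,\xi}\psi^0_{\de,\xi}\,dx,\qquad M_{j\ell}=\int_{B(\xi,\eta)} U_{\de,\xi}^2\psi^\ell_{\de,\xi}\,\partial_j u\,dx\ \ (j=1,\ldots,4).$$
It suffices to prove that $M$ is non-singular for $\de,\eps$ small enough, since then the only solution of the homogeneous system is $c^\ell=0$.

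For the leading-order analysis I would rescale $y=(x-\xi)/\de$, which transforms each product $U_{\de,\xi}^2\psi^\ell_{\de,\xi}\psi^m_{\de,\xi}$ into $\de^{-4}U^2(y)\psi^\ell(y)\psi^m(y)$ and makes the integrals (after the Jacobian $\de^4$) converge as $\de\to 0$ to $\int_{\RR} U^2\psi^\ell\psi^m\,dy$. Since $\psi^0$ is radial and each $\psi^j$ ($j=1,\ldots,4$) is odd in $y_j$, these limit integrals are diagonal: they equal a positive constant $\alpha_m$ when $\ell=m$ and vanish otherwise. For row $k=0$ this immediately gives $M_{0\ell}=\alpha_0$ if $\ell=0$ and $o(1)$ otherwise. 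For rows $k=j\geq 1$, I would decompose $\partial_j u=\partial_{x_j}U_{\de,\xi}+\partial_{x_j}(PU_{\de,\xi}-U_{\de,\xi})+\partial_j\phi$ and use the identity $\partial_{x_j}U_{\de,\xi}=\de^{-1}\psi^j_{\de,\xi}$, so that the first piece produces the dominant contribution $\de^{-1}\bigl(\alpha_j$ on the diagonal, $o(1)$ off-diagonal$\bigr)$. The harmonic correction, controlled by the expansion $PU_{\de,\xi}-U_{\de,\xi}=-\mathfrak C\de H(\cdot,\xi)+\mathcal O(\de^3)$, contributes only $\mathcal O(\de)$ to $M_{j\ell}$.

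The remaining piece $\int_{B(\xi,\eta)}U_{\de,\xi}^2\psi^\ell_{\de,\xi}\partial_j\phi\,dx$ is the delicate one: I would integrate by parts, bounding the boundary integral on $\partial B(\xi,\eta)$ via the fast decay $U_{\de,\xi}^2\psi^\ell_{\de,\xi}=\mathcal O(\de^{3})$ uniformly there, and controlling the interior integral through the global orthogonality $\int_\Omega U_{\de,\xi}^2\psi^\ell_{\de,\xi}\phi=0$ (a consequence of $\phi\in K^\perp_{\de,\xi}$ together with $-\Delta P\psi^\ell_{\de,\xi}=3U_{\de,\xi}^2\psi^\ell_{\de,\xi}$) combined with the estimate $\|\phi\|\lesssim\de^2+\eps\de$ of Proposition \ref{phi norm prop}. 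After dividing rows $1,\ldots,4$ by $\de^{-1}$, the matrix $M$ therefore converges as $(\de,\eps)\to(0,0)$ to the positive diagonal matrix with entries $\alpha_0,\alpha_1,\ldots,\alpha_4$, hence is invertible for all sufficiently small $\de,\eps$. This forces $c^\ell=0$ for $\ell=0,\ldots,4$. The main obstacle is precisely the control of the $\partial_j\phi$ contribution on the localized ball $B(\xi,\eta)$, since the orthogonality information on $\phi$ is only available in the global integrated sense over $\Omega$.
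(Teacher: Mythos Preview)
Your approach is essentially the paper's: test \eqref{cij} against $\psi^0_{\de,\xi}$ on $\Omega$ and against $\partial_j u$ on $B(\xi,\eta)$, and show the resulting $5\times 5$ matrix is diagonally dominant via rescaling and parity. The computations for row $0$ and for the $\partial_j PU_{\de,\xi}$ contribution in rows $1,\dots,4$ match the paper's.

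The one place you diverge is the treatment of the $\partial_j\phi$ contribution, which you call ``the main obstacle'' and ``delicate''. It is neither: the paper disposes of it in one line by Cauchy--Schwarz,
\[
\left|\int_{B(\xi,\eta)} U_{\de,\xi}^2\,\psi^\ell_{\de,\xi}\,\partial_j\phi\right|
\;\le\;\|\nabla\phi\|_{2}\,\|U_{\de,\xi}^2\psi^\ell_{\de,\xi}\|_{2}
\;=\;\mathcal O(\|\phi\|)\cdot\mathcal O(\de^{-1})
\;=\;\mathcal O(\de+\eps),
\]
which is $o(\de^{-1})$ and hence negligible against the diagonal entry $\de^{-1}b$. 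No integration by parts and no orthogonality are needed. In fact your proposed use of the orthogonality $\int_\Omega U_{\de,\xi}^2\psi^\ell_{\de,\xi}\phi=0$ does not actually apply after integrating by parts: the interior term becomes $\int_{B(\xi,\eta)}\partial_j\bigl(U_{\de,\xi}^2\psi^\ell_{\de,\xi}\bigr)\phi$, which involves the \emph{derivative} of $U^2\psi^\ell$ on a \emph{subdomain}, so the global relation for $U^2\psi^\ell$ itself gives no direct information. Dropping that detour and using the direct bound above, your argument goes through and coincides with the paper's.
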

\begin{proof} By \eqref{ridotto1} and \eqref{ridotto2}  
$$\sum\limits_{\ell=0}^4 c^\ell \int\limits_\Omega \U^2\psi_{\de,\xi}^\ell\psi_{\de,\xi}^0= \sum\limits_{\ell=0}^4 c^\ell \int\limits_{B(\xi,\eta)} \U^2 \psi_{\de,\xi}^\ell\partial_j u =0 \quad \mbox{ for all } j=0,\dots,4$$
and the linear system in the $c^\ell$'s is diagonally dominant. Indeed
$$\int\limits_\Omega U_{\delta,\xi} ^2 \psi_{\delta,\xi} ^\ell \psi_{\delta,\xi}^0 =  \begin{cases}
	a+ \O(\delta^4) 		&\mbox{if } \ell =0\\
	\O(\delta^5)		&\mbox{if }\ell=1,\cdots,4\\
\end{cases}$$
for some constant $a\not=0.$
Moreover, 
$$\int\limits_{B(\xi,\eta)} \U^2 \psi_{\de,\xi}^\ell\partial_j u =  \begin{cases}
  \frac{1}{\de} b +\O(\eps)	&\mbox{if } \ell =j\\
	\O(\eps)		&\mbox{if } \ell\neq j\\
\end{cases}$$
for some constant $b\not=0,$ because by  \eqref{ans} and  \eqref{phi norm}, 
$$ \int\limits_{B(\xi_i,\eta_i)} U_{\delta,\xi} ^2 \psi_{\delta,\xi} ^\ell\partial_j\phi 
= 	\O(\|\phi\|) \(\int_{B(\xi,\eta)} |\U\psi_{\de,\xi}^\ell|^2\)^{1/2} = \O(\eps)	\quad \mbox{for all } \ell=0,\cdots, 4
$$
and
$$ \int\limits_{B(\xi,\eta)} \U^2 \psi_{\de,\xi}^\ell\partial_jP\U
= \begin{cases}
  \frac{1}{\de} b +\O(\de^2) &\mbox{if }\ell=j\\
	\O(\de^2)		&\mbox{if } \ell\neq j \\
		\end{cases} 
$$

\end{proof}

Next, we write the first order term of \eqref{ridotto1}.	

\begin{lemma}\label{lemma2} For any $\rho>0$ small enough
	\begin{equation}\label{pro0} \int\limits_{\Omega}(-\Delta u-u^3-\epsilon V(x)u) \psi_{\de,\xi}^0= \mathfrak C^2  \de^2 \tau(\xi) +\eps \de^2 \ln\de \mathfrak c^2\omega V(\xi)  +  o(\de^2) , \end{equation} 
	 as $\eps,\de\to 0$, uniformly respect to $\xi\in \O_\rho$.
\end{lemma}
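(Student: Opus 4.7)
The plan is to substitute the ansatz $u = W_{\de,\xi} + \phi$ (with $W_{\de,\xi} = PU_{\de,\xi}$), expand the integrand, and track which terms contribute at order $\de^2$. Writing
\[
-\Delta u - u^3 - \eps V u = (U_{\de,\xi}^3 - W^3) - 3W^2\phi - 3W\phi^2 - \phi^3 - \eps V(W+\phi) + (-\Delta\phi)
\]
and pairing with $\psi^0_{\de,\xi}$, I expect exactly two leading contributions: $\int(U_{\de,\xi}^3 - W^3)\psi^0$, producing $\mathfrak{C}^2\de^2\tau(\xi)$, and $-\eps\int VW\psi^0$, producing $\eps\de^2\ln\de\cdot\mathfrak{c}^2\omega V(\xi)$. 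All remaining terms must be shown to be $o(\de^2)$.

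For the first leading term I would use $W = U_{\de,\xi} - \mathfrak{C}\de H(\cdot,\xi) + O(\de^3)$ so that $U^3 - W^3 = 3\mathfrak{C}\de\,U_{\de,\xi}^2 H(\cdot,\xi) + \text{lower}$. Rescaling $y = (x-\xi)/\de$ and using $U^2\psi^0 \in L^1(\R^4)$, the integral becomes $3\mathfrak{C}\de^2 \tau(\xi)\int_{\R^4}U^2\psi^0\,dy + o(\de^2)$; explicit evaluation of the integral together with $\mathfrak{C}=2\mathfrak{c}\omega$ and $\mathfrak{c}^2=8$ produces the coefficient $\mathfrak{C}^2\tau(\xi)$. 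For the second leading term, the 4D-specific phenomenon is the logarithmic non-integrability at infinity of $U\psi^0 \sim \mathfrak{c}^2/|y|^4$: after rescaling and Taylor-expanding $V(\xi+\de y) = V(\xi) + O(\de|y|)$,
\[
-\eps\int_\Omega V U_{\de,\xi}\psi^0_{\de,\xi}\,dx = -\eps\de^2\int_{(\Omega-\xi)/\de} V(\xi+\de y)\, U\psi^0\,dy = \eps\de^2\ln\de\cdot\mathfrak{c}^2\omega V(\xi) + o(\de^2),
\]
where the $\ln\de$ arises from $\int_{1<|y|<C/\de}|y|^{-4}\,dy = \omega\ln(C/\de)$. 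The projection correction $-\mathfrak{C}\de H$ in $W$ contributes only $O(\eps\de^2) = o(\de^2)$.

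The remainder terms are borderline because $\|\phi\| \lesssim \de^2 + \eps\de$. The most delicate is $3\int W^2\phi\psi^0$: naive H\"older only gives $O(\|\phi\|) = O(\de^2)$. The trick is to exploit $\phi \in K^\perp_{\de,\xi}$: from $\l\phi, P\psi^0_{\de,\xi}\r = 0$ together with $-\Delta P\psi^0 = 3U^2_{\de,\xi}\psi^0_{\de,\xi}$, one obtains $\int U_{\de,\xi}^2\phi\psi^0_{\de,\xi} = 0$, so replacing $W^2$ by $W^2 - U^2 = O(\de\,U_{\de,\xi}H)$ improves the bound to $\de\|\phi\|_4\|U_{\de,\xi}\psi^0_{\de,\xi}\|_{4/3} = O(\de^2\|\phi\|) = o(\de^2)$ via $\|U_{\de,\xi}\psi^0_{\de,\xi}\|_{4/3} = O(\de)$. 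The terms $\int W\phi^2\psi^0$ and $\int\phi^3\psi^0$ are directly $o(\de^2)$ since $\|\phi\|^2 = o(\de^2)$, and $\eps\int V\phi\psi^0 = O(\eps\|\phi\|\,\|\psi^0_{\de,\xi}\|_{4/3}) = O(\eps\de\|\phi\|) = o(\de^2)$, using $\|\psi^0_{\de,\xi}\|_{L^{4/3}(\Omega)} = O(\de)$. For $\int(-\Delta\phi)\psi^0$, split $\psi^0 = P\psi^0 + h$ with $h$ harmonic in $\Omega$ and $\|h\|_{L^\infty(\Omega)} = O(\de)$ (by the maximum principle, since $h|_{\partial\Omega} = \psi^0_{\de,\xi}|_{\partial\Omega} = O(\de)$ for $\xi \in \O_\rho$): the $P\psi^0$ part vanishes by orthogonality, while Green's identity applied to $h$ and $\phi$ (using $\Delta h = 0$, $\phi|_{\partial\Omega}=0$) gives $\int(-\Delta\phi)h \le \|h\|_4\|\Delta\phi\|_{4/3} = O(\de)\cdot O(\de^2 + \eps\de) = o(\de^2)$, the bound on $\|\Delta\phi\|_{4/3}$ coming from the equation \eqref{equ1} satisfied by $\phi$ (the $3W^2\phi$ piece being controlled via $\|W^2\|_2 = O(1)$).

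The main obstacle is twofold. First, the logarithm extraction requires tracking the tail of the scaled integral out to the boundary of the rescaled domain $(\Omega-\xi)/\de$ of diameter $\sim 1/\de$, isolating the universal coefficient of $\ln(1/\de)$ while absorbing the $O(\de|y|)$ Taylor remainder of $V$ and the geometry of $\Omega$ into $o(\de^2)$. Second, because $\|\phi\|$ is only $O(\de^2 + \eps\de)$, several natural error estimates sit exactly at the threshold $\de^2$ and must be improved, either by the orthogonality conditions (as in the $W^2\phi\psi^0$ term) or by the elliptic regularity of $\phi$ coming from \eqref{equ1} (as in the $-\Delta\phi$ term). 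This borderline behaviour is precisely the 4D-specific difficulty alluded to by the authors in the introduction.
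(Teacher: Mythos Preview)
Your proof is correct and follows the same overall decomposition as the paper: split into the main term $I_1=\int(U_{\de,\xi}^3-(PU_{\de,\xi})^3-\eps V\,PU_{\de,\xi})\psi^0_{\de,\xi}$, the linear-in-$\phi$ term $I_2$, and the higher-order term $I_3$, then extract the two leading contributions from $I_1$ by rescaling (including the logarithmic divergence of $\int U\psi^0$ in 4D) exactly as the paper does.

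The one genuine difference is how the borderline term $\int_\Omega(-\Delta\phi)\psi^0_{\de,\xi}$ is controlled. The paper integrates by parts against $\psi^0_{\de,\xi}$, obtaining $3\int U_{\de,\xi}^2\phi\,\psi^0_{\de,\xi}$ plus the boundary term $-\int_{\partial\Omega}\psi^0_{\de,\xi}\,\partial_\nu\phi$, and then invokes the nontrivial estimate $\int_{\partial\Omega}|\partial_\nu\phi|^2=o(\de^2)$ from Rey to kill the latter. You instead split $\psi^0_{\de,\xi}=P\psi^0_{\de,\xi}+h$ with $h$ harmonic and $\|h\|_\infty=O(\de)$: the $P\psi^0$ piece vanishes by orthogonality, and the $h$ piece you bound directly by $\|h\|_4\|\Delta\phi\|_{4/3}$. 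These two routes are in fact estimating the same quantity, since Green's formula gives $\int_\Omega(-\Delta\phi)h=-\int_{\partial\Omega}\psi^0_{\de,\xi}\,\partial_\nu\phi$; your approach has the advantage of being self-contained (no appeal to Rey's boundary regularity), at the price of needing $\|\Delta\phi\|_{4/3}=O(\de^2+\eps\de)$, which does follow from the equation \eqref{cij} once one checks that the coefficients $c^j$ are themselves $O(\de^2+\eps\de)$ by testing against $P\psi^\ell$. One small wording issue: the phrase ``Green's identity \dots\ gives $\int(-\Delta\phi)h\le\|h\|_4\|\Delta\phi\|_{4/3}$'' is misleading --- that bound is just H\"older's inequality and needs no integration by parts; Green's identity would instead return you to the boundary integral the paper handles via Rey.
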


\begin{proof}
We point out that
$$\begin{aligned}
\int\limits_{\Omega}(-\Delta u-u^3-\epsilon Vu) \psi_{\delta,\xi}^0&=\underbrace{\int\limits_{\Omega}\(-\Delta U_{\de,\xi}-U_{\de,\xi}^3-\epsilon U_{\de,\xi}\) \psi_{\delta,\xi}^0}_{=:I_1}\\ &
+\underbrace{\int\limits_{\Omega}\(-\Delta \phi-3U_{\de,\xi}^2\phi\) \psi_{\delta,\xi}^0}_{=:I_2}\\
&+\underbrace{\int\limits_{\Omega}\( \phi^3+3U_{\de,\xi} \phi^2\) \psi_{\delta,\xi}^0}_{=:I_3}\
\end{aligned}$$
First of all, let us prove that   
\beq\label{pro01}I_1 = \de^2 \mathfrak C^2H(\xi,\xi)+\eps \de^2 \ln\de \mathfrak c^2 \omega  V(\xi)  + o(\de^2) .\eeq
		We observe that  \begin{align*}I_1 &= \int_\Om \left[   \U^3 -  P\U^3 - \eps V(x)    P\U \right]   \psi_{\de,\xi}^0 \\&= \int_\Om (\U^3-P\U^3)  \psi_{\delta,\xi}^0 - \eps \int_\Om V(x)  P\U \psi_{\de,\xi}^0
		.  \end{align*} 
	Now
	\begin{align*}
		\int_\Om ( \U^3-P\U^3)&\psi_{\de,\xi}^0 =  3 \int_{B(\xi,\rho)} \U^2 \underbrace{(\U-P\U)}_{\de  \mathfrak C H(x,\xi) +\O(\de^3)} \psi_{\de,\xi}^0 + \O(\de^3) \\& 
		=3\de  \mathfrak C \int_\Om  H(x,\xi)\U^2\psi_{\de,\xi}^0+ \O(\de^3)
		\\&= 3\de^4  \mathfrak C \mathfrak c^3 \int_{B(\xi,\rho)}  H(x,\xi) \frac{|x-\xi|^2-\de^2}{\left(\de^2+|x-\xi|^2\right)^4}  + o(\de^2)\\
		&=3 \de^2  \mathfrak C \mathfrak c^3 \int_{B(0,\rho/\de)} \underbrace{H(\xi+\de t,\xi)}_{H(\xi,\xi)+\O(\de)} \frac{|t|^2-1}{(1+|t|^2)^4} + o(\de^2)  \\
		&= \de^2  \mathfrak C^2 H(\xi,\xi) + o(\de^2)
	\end{align*}
	because
		\begin{equation*}
  \int_{\mathbb R^4}   \frac{|t|^2-1}{(1+|t|^2)^4}  =  \frac\omega{12},	\end{equation*}
and 
\begin{align*} \eps \int_\Om V(x) P\U \psi_{\de,\xi}^0  &= \eps \de^2 \mathfrak c^2 \int_{B(\xi,\rho)} V(x)\frac{|x-\xi|^2-\de^2}{(\de^2+|x-\xi|^2)^3} +\O(\de^3) \\&= \eps \de^2 \mathfrak c^2 \int_{B(0,\rho/\de)} V(\de t+ \xi)\frac{|y|^2-1}{(1+|y|^2)^3} +\O(\de^3)\\ & = -\eps \de^2 \ln\de \mathfrak c^2 \omega V(\xi) + o(\de^2 \eps \ln\de) .
\end{align*}
Let us estimate the other terms.
It is important to point out the estimate $$
 \int_{\partial\Om} |\partial_{\nu}\phi|^2 = o(\de^2)
 $$   proved in \cite{rey1}. 
 Then, recalling that $-\Delta  \psi_{\delta,\xi}^0=3U_i^2 \psi_{\delta,\xi}^0$, for all $i=1\cdots, k$,   we have
\begin{align*}
	\int_\Om (-\Delta\phi)  \psi_{\delta,\xi}^0 &= \underbrace{\int_\Om \phi (-\Delta \psi_{\delta,\xi}^0)}_{=3\int_\Om \phi U_i^2  \psi_{\delta,\xi}^0} + \int_{\partial \Om} \underbrace{\phi}_{=0} \nabla  \psi_{\delta,\xi}^0\cdot \nu  -\int_{\partial\Om } \underbrace{ \psi_{\delta,\xi}^0}_{=\O(\de)}\nabla \phi\cdot\nu \\&= 3\int_\Om \phi U_i^2 \psi_{\delta,\xi}^0 +o(|\de|^2) .
\end{align*}
Therefore,
\begin{equation} \label{I2w} \ba
	 |I_2|&\lesssim
	  3\int_\Om \phi |P\U^2-\U^2|| \psi_{\delta,\xi}^0| +\eps  \int_\Om |\phi||\psi_{\de,\xi}^0|+o(|\de|^2) \\&\lesssim
	   \|\phi\|_4 \|(P\U^2-U_i^2) \psi_{\delta,\xi}^0\|_{4/3} + \eps \|\phi\|_4 \| \psi_{\delta,\xi}^0\|_{4/3}=o\(\de^2\) . \ea
\end{equation}
and \begin{equation} \label{I3}\ba
		|I_3| &\leq  \int_\Om  \left[|\phi|^3+3|\phi|^2|   P\U|\right] |\psi_{\de,\xi}^0| 	\\ &\lesssim \|\phi\|^3 \| \psi_{\delta,\xi}^0\|_4+\|\phi\|^2
		\| P\U  \psi_{\delta,\xi}^0 \|_2= o (\de^2).\ea\end{equation}
	Finally, \eqref{pro0} follows by \eqref{pro01}, \eqref{I2w} and \eqref{I3}.
\end{proof}

Finally, we write the first order term of \eqref{ridotto2}.	
  
\begin{lemma} \label{lemma3} For any $\rho>0,$ there exists $\eta_i>0$ such that  
$$ \int\limits_{B(\xi_i,\eta_i)}(-\Delta u-u^3-\epsilon Vu)\partial_j u=
 -\frac 12  \de^2 \left[ \mathfrak C^2  {\partial_j}\tau(\xi) +(\eps\ln\de) \omega \mathfrak c^2 {\partial_j}V(\xi)  \right]+o\(\delta^2\), $$
	 as $\eps,\de\to 0$, uniformly respect to $\xi\in \O_\rho$.
\end{lemma}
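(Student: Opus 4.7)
The plan is a local Pohozaev identity on $B(\xi,\eta)$ that converts the integral into surface integrals on $\partial B(\xi,\eta)$ plus a single interior $\eps$-term, followed by a careful identification of the two claimed main contributions. Multiplying $-\Delta u-u^3-\eps V u$ by $\partial_j u$ and using the divergence identities $(-\Delta u)\partial_j u=\tfrac12\partial_j|\nabla u|^2-\operatorname{div}(\nabla u\,\partial_j u)$, $u^3\partial_j u=\tfrac14\partial_j u^4$, and $V u\,\partial_j u=\tfrac12 V\partial_j u^2$, integration by parts yields
\[
\int_{B(\xi,\eta)}(-\Delta u-u^3-\eps V u)\partial_j u=\mathcal B+\frac{\eps}{2}\int_{B(\xi,\eta)}\partial_j V\cdot u^2,
\]
with $\mathcal B=\tfrac12\int_{\partial B}|\nabla u|^2\nu_j-\int_{\partial B}\partial_\nu u\,\partial_j u-\tfrac14\int_{\partial B}u^4\nu_j-\tfrac{\eps}{2}\int_{\partial B}V u^2\nu_j$.

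For the interior $\eps$-term I would substitute $u=\PU+\phi$, Taylor-expand $\partial_j V(x)=\partial_j V(\xi)+\O(|x-\xi|)$, and use $\|\phi\|\lesssim\de^2+\eps\de$ from Proposition \ref{phi norm prop} together with the expansion of $\PU$ to absorb all cross-terms; the leading contribution is then $\tfrac{\eps}{2}\partial_j V(\xi)\int_{B(\xi,\eta)}\U^2$. The rescaling $x=\xi+\de t$ gives the log-divergent integral
\[
\int_{B(\xi,\eta)}\U^2=\mathfrak c^2\de^2\,\omega\int_0^{\eta/\de}\frac{r^3}{(1+r^2)^2}\,dr=-\omega\mathfrak c^2\de^2\ln\de+\O(\de^2),
\]
which produces exactly the $V$-term $-\tfrac12\eps\de^2\ln\de\,\omega\mathfrak c^2\partial_j V(\xi)$ of the claim. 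For the surface integrals in $\mathcal B$, one has $u^4=\O(\de^4)$ and $\eps V u^2=\O(\eps\de^2)$ on $\partial B(\xi,\eta)$, both $o(\de^2)$; the remaining two reduce, via $\PU=\mathfrak C\de G(\cdot,\xi)+\O(\de^3)$, to the scaled Green's function integral
\[
\mathfrak C^2\de^2\int_{\partial B(\xi,\eta)}\left[\tfrac12|\nabla_x G|^2\nu_j-\partial_\nu G\,\partial_j G\right]d\sigma.
\]

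The exact Green's function computation is the core of the argument. Decompose $G(x,\xi)=\frac{1}{2\omega|x-\xi|^2}-H(x,\xi)$ and split the integrand into $\Gamma\Gamma$, $\Gamma H$ and $HH$ blocks. The $\Gamma\Gamma$ block is odd in $(x-\xi)$ and integrates to zero by spherical symmetry. The $HH$ block is the divergence-free stress tensor of the harmonic function $H(\cdot,\xi)$, so by the divergence theorem its surface integral vanishes identically (not only to leading order). The $\Gamma H$ block simplifies on $\{|x-\xi|=\eta\}$ to $-\partial_{x_j}H(x,\xi)/(\omega\eta^3)$; the mean-value property for the harmonic function $\partial_{x_j}H(\cdot,\xi)$, combined with the symmetry $H(x,\xi)=H(\xi,x)$, then produces $-\partial_{x_j}H(\xi,\xi)=-\tfrac12\partial_j\tau(\xi)$. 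This yields exactly $-\tfrac12\mathfrak C^2\de^2\partial_j\tau(\xi)$.

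It remains to control the contributions of $\phi$ and of the $\O(\de^3)$ remainder in $\PU$ to the surface integrals, which I would do by a co-area choice of $\eta$: for a suitable $\eta\in[\eta_0,2\eta_0]$, Fubini gives $\int_{\partial B(\xi,\eta)}(|\phi|^2+|\nabla\phi|^2)d\sigma\lesssim\eta_0^{-1}\|\phi\|^2=\O((\de^2+\eps\de)^2)$, and combined with $|\nabla\PU|=\O(\de)$ on $\partial B$ and Cauchy-Schwarz these terms are all $o(\de^2)$. The main obstacle is the Green's function boundary identity, which hinges on three distinct cancellations ($\Gamma\Gamma$ symmetry, $HH$ divergence-freeness via the harmonicity of $H$, and the mean-value property for $\partial_{x_j}H$); the trace control for $\phi$ is the secondary difficulty and is what forces $\eta$ to be chosen rather than prescribed in the statement.
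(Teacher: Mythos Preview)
Your proposal is correct and follows essentially the same route as the paper: a local Pohozaev identity on $B(\xi,\eta)$, a co-area choice of $\eta$ to control the trace of $\phi$ on $\partial B(\xi,\eta)$, the far-field expansion $\PU=\mathfrak C\,\de\,G(\cdot,\xi)+\O(\de^3)$, the mean-value property applied to the harmonic function $\partial_{x_j}H(\cdot,\xi)$, and the logarithmic scaling of $\int_{B(\xi,\eta)}\U^2$ for the $V$-term. Your treatment of the boundary Green-function integral is somewhat more systematic---you explicitly separate the $\Gamma\Gamma$, $\Gamma H$, and $HH$ blocks and invoke the divergence-freeness of the harmonic stress tensor to kill the $HH$ block, whereas the paper argues more tersely---but the underlying ideas and the structure of the argument are identical.
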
  
\begin{proof}
First of all, we point out that
	\begin{equation} \label{key1}
		\int\limits_{B(\xi_i,\eta_i)}(-\Delta u-u^3
		)\partial_j u =		\int\limits_{\partial B(\xi_i,\eta_i)}
		\left(-\partial_\nu u\partial_j u+\frac12 |\nabla u|^2\nu_j-\frac14 u^4\nu_j
		\right)
	\end{equation}
	By Lemma \ref{coarea lem} and \eqref{phi norm}, we choose $\eta_{i}$ such that
\begin{equation} \label{key2}  
  \int_{\partial B(\xi_i,\eta_{i})}\left( |\nabla \phi|^2 + |\phi|^4
  \right) \lesssim \de^4.  \end{equation}

Now, by \eqref{ans}  
	\begin{equation} \label{key3}  
	U_{\de,\xi}(x):= {\de\mathfrak c\over |x-\xi|^2}-    \mathfrak C\delta H(x,\xi)+\mathcal O\(|\de|^2\)\end{equation}
	$C^1-$uniformly on $\partial B(\xi,\eta).$  
It is crucial to point out that the function $H(x,\xi)$ is harmonic. 
Therefore, by \eqref{key1}, \eqref{key2} and \eqref{key3} 
$$\begin{aligned}
 &	\int\limits_{B(\xi,\eta)}(-\Delta u-u^3
 )\partial_j u \\ =&	\de^2 \int\limits_{\partial B(\xi,\eta)}
		\left[-\partial_\nu \( { \mathfrak c\over |x-\xi|^2}+   \mathfrak C H(x,\xi)\)\partial_j \( {\mathfrak c\over |x-\xi|^2}+   \mathfrak CH(x,\xi)\)\right. \\&+ \de^2 \frac12 \left. \left|\nabla \( {\mathfrak c\over |x-\xi_i|^2}+   \mathfrak CH(x,\xi)\)\right|^2\nu_j \right]+o\(|\de|^2\)\\
		=& -\de^2\int_{\partial B(\xi,\eta)} \nabla \(\mathfrak c \over |x-\xi|^2\) \cdot \nu \partial_{x_j}H(x,\xi) + o(\de^2)
		\\=&	2\mathfrak c \mathfrak C\delta^2\frac1{|\eta|^3}\int\limits_{\partial B(\xi,\eta)}
   \partial_j H(x,\xi) +o\(\de^3\)={\mathfrak C^2}\delta^2\partial_jH(x,\xi)\big|_{x=\xi}+o\(\de^2\)
\end{aligned}$$
because $H(x,\xi)$ is harmonic  and also
 $2\mathfrak c \omega =  \mathfrak C$ .\\
   Finally, as $ \tau(x) = H(x,x) $ denotes the Robin's function, by
  $$ \partial_{\xi_j}\tau(\xi)= (\partial_{x_j} H(x,y) + \partial_{y_j}H(x,y))|_{(x,y)=(\xi,\xi)} = 2\partial_{x_j}H(x,y)|_{(x,y)=(\xi,\xi)}$$
    follows that 
    \begin{equation}\label{key14}
    	\int_{B(\xi,\eta)} (-\Delta u -u^3)\partial_j u = \frac 12 \mathfrak C^2 \de^2 \partial_j \tau(\xi)+o(\de^2). 
    \end{equation}

Arguing in a similar way, we also have
\begin{equation} \label{key15}\ba
-\eps \int_{B(\xi,\eta)} V(x) u \partial_j u &=\eps \frac12 \int_{B(\xi,\eta)}  \(\partial_j V(x)\) u^2-\eps \frac12 \int_{\partial B(\xi,\eta)}   V(x)  u^2\nu_j\\
&=-\frac 12\eps \de^2 \omega \mathfrak c^2 \partial_j V(\xi) \ln\de + o(\de^2) 
\ea\end{equation}

By \eqref{key14} and \eqref{key15} the claim follows.
    \end{proof}
  
    \begin{lemma} \label{coarea lem}
 If there exists $C_1>0$ such that \beq \label{coarea1} \int_{B(\xi,\eta_1)\backslash B(\xi,\eta_2)} |f(x)| dx \leq C_1, \eeq then there exist $C_2>0$ and $\bar{\eta}\in(\eta_1,\eta_2)$ such that $$\int_{\partial B(\xi,\bar{\eta})} |f (x)|dx  \leq C_2 .$$
\end{lemma}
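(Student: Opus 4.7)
The plan is to reduce the statement to a standard averaging argument using the coarea formula (equivalently, integration in spherical coordinates centered at $\xi$).

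First, I would write the annular integral as an iterated integral over radii:
\begin{equation*}
\int_{B(\xi,\eta_1)\setminus B(\xi,\eta_2)} |f(x)|\,dx = \int_{\eta_2}^{\eta_1} \left(\int_{\partial B(\xi,r)} |f(x)|\,d\sigma(x)\right) dr,
\end{equation*}
which is valid for any $f \in L^1$ by the coarea formula applied to the smooth function $x \mapsto |x-\xi|$, whose gradient has modulus $1$. Denote the inner integral by $g(r) := \int_{\partial B(\xi,r)} |f|\,d\sigma$, which is measurable in $r$ and satisfies $\int_{\eta_2}^{\eta_1} g(r)\,dr \leq C_1$ by hypothesis \eqref{coarea1}.

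Next, I would argue by contradiction: if $g(r) > C_1/(\eta_1-\eta_2)$ for almost every $r\in(\eta_2,\eta_1)$, then integrating over $(\eta_2,\eta_1)$ gives $\int_{\eta_2}^{\eta_1} g(r)\,dr > C_1$, contradicting the bound above. Hence there must exist $\bar\eta \in (\eta_2,\eta_1)$ for which $g(\bar\eta) \leq C_1/(\eta_1-\eta_2)$, and the lemma holds with $C_2 := C_1/(\eta_1-\eta_2)$.

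There is no real obstacle here; the only subtlety is confirming that one may select a genuine $\bar\eta$ at which the spherical integral is finite (rather than only an almost-everywhere statement). Since $g$ is defined $a.e.$ and one just needs a single good radius, the averaging argument suffices. This lemma is invoked in the proof of Lemma~\ref{lemma3} with $f = |\nabla\phi|^2 + |\phi|^4$ on an annulus around $\xi$; the integral bound $\|\phi\|^2 + \|\phi\|_4^4 \lesssim \de^4$ from \eqref{phi norm} then yields a radius $\bar\eta = \eta_i$ on whose boundary sphere the required estimate \eqref{key2} holds.
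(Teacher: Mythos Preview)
Your argument is correct and is essentially the same as the paper's: both apply the coarea formula to write the annular integral as $\int g(r)\,dr$ and then use an averaging/contradiction argument to extract a radius $\bar\eta$ with $g(\bar\eta)\le C_1/|\eta_1-\eta_2|$. Your version is in fact slightly more careful about the ordering of $\eta_1,\eta_2$ and about selecting a genuine (rather than a.e.) radius, and you make the constant $C_2=C_1/(\eta_1-\eta_2)$ explicit, but the method is identical.
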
	\begin{proof} Assume that for any $\eta \in (\eta_1,\eta_2)$ and $C>0$
 \[ \int_{\partial B(\xi,\eta)} |f(x)|dx > C .\] Then by the coarea formula 
  \[ \int_{B(\xi,\eta_1)\backslash B(\xi,\eta_2)}|f(x)|dx = \int_{\eta_1}^{\eta_2} \left(\int_{\partial B(\xi,\eta)} |f(x)|dx \right) d\eta > C(\eta_2-\eta_1),  \] and a contradiction arises.
	\end{proof}

    \subsection{Proof of Theorem \ref{main}: completed}
If we set $\de=e^{-\frac{t}{\eps}}$, with $t>0$, by Lemma \ref{lemma1}, Lemma \ref{lemma2} and Lemma \ref{lemma3},  the problem reduces to find $t>0$ and $\xi\in\Omega$ such that
\begin{equation}\label{sistema_definitivo}
	\begin{cases}
		  c \tau(\xi) -t  V(\xi) + o(1) = 0 \ ,\\
	c \nabla  \tau(\xi) -t \nabla V(\xi)  +o(1)=0 .
	\end{cases}
\end{equation}
with $  c:=\frac{	\mathfrak C^2}{\omega \mathfrak c^2}.$
Let $\xi_0\in\Omega$ be a non-degenerate critical point of the function $f(\xi):={\tau(\xi)\over V(\xi)}$ with $V(\xi_0)>0$. 
Then the point  $(t_0,\xi_0)$, $ t_0:=c\frac{\tau(\xi_0)}{V(\xi_0)} $, is an isolated zero of the function
 $F(t,\xi):(0,+\infty)\times\Omega\to \mathbb R\times\mathbb R^4$ defined by
$$F(t,\xi):=	\Big ( c \tau(\xi) -t  V(\xi),	  c \nabla\tau(\xi) -t \nabla V(\xi)\Big).$$
We claim that the local degree
\begin{equation}\label{locdeg}\mathtt{degloc} \Big(F,(t_0,\xi_0)\Big)\not=0.\end{equation}
Indeed, 
$$ F'(t_0,x_0) = \(\begin{matrix} &-t_0  V(\xi_0)&-t_0  \nabla V(\xi_0)\\
& c \nabla\tau(\xi_0) -t_0 \nabla V(\xi_0) & c \mathcal D^2\tau(\xi_0)  -t_0 \mathcal D^2 V(\xi_0)\\
\end{matrix}\)$$
where
$$c \nabla\tau(\xi_0) -t_0 \nabla V(\xi_0)=-c\tau(\xi_0)\nabla f(\xi_0)=0,$$
so 
$$\begin{aligned}\mathtt{det}\ F'(t_0,x_0)&=c^2\frac{\tau(\xi_0)}{V(\xi_0)}\( -V(\xi_0)\mathcal D^2\tau(\xi_0) +\tau(\xi_0) \mathcal D^2 V(\xi_0) \)
\\ &= -c^2{V^3(\xi_0)\tau(\xi_0)}\mathtt{det}\ \mathcal D^2f(\xi_0)\not=0.\end{aligned}$$
Finally, \eqref{locdeg} implies that if $\epsilon$ is small enough the system \eqref{sistema_definitivo} has a solution $t_\epsilon,\xi_\epsilon$ such that $t_\epsilon\to t_0 $ and $\xi_\eps\to\xi_0$ as $\epsilon\to0.$ That concludes the proof.

\bibliography{biblio}
\bibliographystyle{abbrv}

\end{document}